\newtheorem{definition}[equation]{Definition}
\newtheorem{lemma}[equation]{Lemma}
\newtheorem{proposition}[equation]{Proposition}
\newtheorem{theorem}[equation]{Theorem}
\newtheorem{remark}[equation]{Remark}
\newtheorem{conjecture}[equation]{Conjecture}
\newtheorem{problem}[equation]{Problem}
\newcommand\lemmaref[1]{Lemma~\ref{#1}}
\newcommand\propositionref[1]{Proposition~\ref{#1}}
\newcommand\remarkref[1]{Remark~\ref{#1}}
\newcommand\conjectureref[1]{Conjecture~\ref{#1}}
\newcommand\problemref[1]{Problem~\ref{#1}}
\title{Discretely decomposable restrictions of $(\mathfrak{g},K)$-modules for Klein four symmetric pairs of exceptional Lie groups of Hermitian type}
\author{Haian HE}
\date{}
\address{Department of Mathematics,
College of Sciences, Shanghai University,
No.\ 99 Shangda Road, Baoshan District,
Shanghai, China P.\ R.\ 200444}
\email{hebe.hsinchu@yahoo.com.tw}
\subjclass[2010]{22E46}
\keywords{discretely decomposable; Klein four symmetric pair; Lie group of Hermitian type}
\begin{document}
\maketitle
\begin{abstract}
Let $(G,G^\Gamma)$ be a Klein four symmetric pair. The author wants to classify all the Klein four symmetric pairs $(G,G^\Gamma)$ such that there exists at least one nontrivial unitarizable simple $(\mathfrak{g},K)$-module $\pi_K$ that is discretely decomposable as a $(\mathfrak{g}^\Gamma,K^\Gamma)$-module. In this article, three assumptions will be made. Firstly, $G$ is an exceptional Lie group of Hermitian type, i.e., $G=\mathrm{E}_{6(-14)}$ or $\mathrm{E}_{7(-25)}$. Secondly, $G^\Gamma$ is noncompact. Thirdly, there exists an element $\sigma\in\Gamma$ corresponding to a symmetric pair of anti-holomorphic type such that $\pi_K$ is discretely decomposable as a $(\mathfrak{g}^\sigma,K^\sigma)$-module.
\end{abstract}
\section{Introduction}
Let $G$ be a noncompact simple Lie group, and $G'$ a reductive subgroup of $G$. Write $\mathfrak{g}$ and $\mathfrak{g}'$ for the complexified Lie algebras of $G$ and $G'$ respectively. Take a maximal compact subgroup $K$ of $G$ such that $K':=K\cap G'$ is a maximal compact subgroup of $G'$. An important problem in branching theory is to classify the reductive Lie group pairs $(G,G')$ such that there exists at least one nontrivial unitarizable simple $(\mathfrak{g},K)$-module that is discretely decomposable as a $(\mathfrak{g}',K')$-module.

If $G'$ is compact, this always holds obviously. Hence in this article, the author will only considers the cases when $G'$ is noncompact. If $(G,G')$ is a symmetric pair, the problem was solved completely by Toshiyuki KOBAYASHI and Yoshiki \={O}SHIMA in \cite[Corollary 5.8]{KO2}. If $G$ is simple Lie group of Hermitian type, and $G'$ contains the center of $K$, then any irreducible unitary highest weight representation $\pi$ of $G$ is $K'$-admissible \cite{Ko3}, and it follows from \cite[Proposition 1.6]{Ko4} that the underlying $(\mathfrak{g},K)$-module $\pi_K$ is discretely decomposable as a $(\mathfrak{g}',K')$-module. In particular, it is the case when $(G,G^\Gamma)$ is a Klein four symmetric pair of holomorphic type \cite[Corollary 7]{H1}, where $\Gamma$ is a Klein four subgroup of the automorphism group $\mathrm{Aut}G$ and $G^\Gamma$ is the subgroup of the fixed points under the action of $\Gamma$ on $G$. In \cite{H1} and \cite{H2}, the author classified all the Klein four symmetric pairs of holomorphic type for $\mathrm{E}_{6(-14)}$ and $\mathrm{E}_{7(-25)}$ respectively. A more general question for Klein four symmetric pair is as follow.
\begin{problem}\label{20}
Classify all the Klein four symmetric pairs $(G,G^\Gamma)$ such that there exists at least one nontrivial unitarizable simple $(\mathfrak{g},K)$-module $\pi_K$ that is discretely decomposable as a $(\mathfrak{g}^\Gamma,K^\Gamma)$-module.
\end{problem}
\problemref{20} is hard, even if one just considers the cases when $G=\mathrm{E}_{6(-14)}$ or $\mathrm{E}_{7(-25)}$. However, one may consider the following problem which seems not so hard as \problemref{20}.
\begin{problem}\label{21}
Classify all the Klein four symmetric pairs $(G,G^\Gamma)$ such that there exists at least one nontrivial unitarizable simple $(\mathfrak{g},K)$-module $\pi_K$ satisfying two conditions:
\begin{enumerate}[$\bullet$]
\item $\pi_K$ is discretely decomposable as a $(\mathfrak{g}^\Gamma,K^\Gamma)$-module;
\item $\pi_K$ is discretely decomposable as a $(\mathfrak{g}^\sigma,K^\sigma)$-module for some involutive automorphism $\sigma\in\Gamma$ of anti-holomorphic type.
\end{enumerate}
\end{problem}
where $\mathfrak{g}^\sigma$ (respectively, $K^\sigma$) is the subalgebra (respectively, subgroup) of the fixed points under the action of $\sigma$ on $\mathfrak{g}$ (respectively, $K^\sigma$).

The article is organized as follows. After recalling the definitions and the fundamental results of discrete decomposability and Klein four symmetric pairs in Section 2, the author will make use of the classification result in \cite[Theorem 5.2 \& Table 1]{KO2} to show that the only pair worthy to be studied is $(\mathrm{E}_{6(-14)},\mathrm{Spin}(8,1))$ in Section 3. Then the author will show that $(G,G^\Gamma)=(\mathrm{E}_{6(-14)},\mathrm{Spin}(8,1))$ is indeed a Klein four symmetric pair, and there exists a nontrivial unitarizable simple $(\mathfrak{g},K)$-module $\pi_K$ satisfying the two conditions in \problemref{21}. This involves some computations, and the main theorem will be displayed in Section 4. Finally, in Section 5, the author will briefly discuss $G'$-admissibility of unitary irreducible representations of $G$, based on a conjecture by Toshiyuki KOBAYASHI in \cite{Ko1}.

When writing this article, the author communicated with professor Jorge VARGAS frequently, who gave some important suggestions. The author would like to express the thankfulness to professor Jorge VARGAS.
\section{Preliminary}
\subsection{Discrete decomposability}
Let $G$ be a noncompact simple Lie group with the complexified Lie algebra $\mathfrak{g}$, and $G'$ a reductive subgroup of $G$ with the complexified Lie algebra $\mathfrak{g}'$. Take a maximal compact subgroup $K$ of $G$, which is the subgroup of the fixed points under the action of a Cartan involution $\theta$ on $G$. In particular, it is assumed that $\theta(G')=G'$; equivalently, $K':=K\cap G'$ is a maximal compact subgroup of $G'$.
\begin{definition}\label{1}
A $(\mathfrak{g},K)$-module $X$ is called discretely decomposable as a $(\mathfrak{g}',K')$-module if there exists an increasing filtration $\{X_i\}_{i\in\mathbb{Z}^+}$ of $(\mathfrak{g}',K')$-modules such that $\bigcup\limits_{i\in\mathbb{Z}^+}X_i=X$ and $X_i$ is of finite length as a $(\mathfrak{g}',K')$-module for any $i\in\mathbb{Z}^+$.
\end{definition}
\begin{remark}\label{2}
Suppose that $X$ is a unitarizable $(\mathfrak{g},K)$-module. According to \cite[Lemma 1.3]{Ko4}, $X$ is discretely decomposable as a $(\mathfrak{g}',K')$-module if and only if $X$ is isomorphic to an algebraic direct sum of simple $(\mathfrak{g}',K')$-modules.
\end{remark}
\begin{proposition}\label{3}
Let $\pi_K$ be a unitarizable simple $(\mathfrak{g},K)$-module. Then $\pi_K$ is discretely decomposable as a $(\mathfrak{g}',K')$-module if and only if there exists a simple $(\mathfrak{g}',K')$-module $\tau_{K'}$ such that $\mathrm{Hom}_{\mathfrak{g}',K'}(\tau_{K'},\pi_K)\neq\{0\}$.
\end{proposition}
\begin{proof}
See \cite[Lemma 1.5]{Ko4}.
\end{proof}
\subsection{Klein four symmetric pairs}
\begin{definition}\label{4}
Let $G$ (respectively, $\mathfrak{g}_0$) be a real simple Lie group (respectively, Lie algebra), and let $\Gamma$ be a Klein four subgroup of $\mathrm{Aut}G$ (respectively, $\mathrm{Aut}\mathfrak{g}_0$). Denote by $G^\Gamma$ (respectively, $\mathfrak{g}_0^\Gamma$) the subgroup (respectively, subalgebra) of the fixed points under the action of all elements in $\Gamma$ on $G$ (respectively, $\mathfrak{g}_0$). Then $(G,G^\Gamma)$ (respectively, $(\mathfrak{g}_0,\mathfrak{g}_0^\Gamma)$) is called a Klein four symmetric pair.
\end{definition}
\begin{remark}\label{5}
In particular, if $G$ (respectively, $\mathfrak{g}_0$) is a real simple Lie group (respectively, Lie algebra) of Hermitian type, and every nonidentity element $\sigma\in\Gamma$ defines a symmetric pair of holomorphic type, then $(G,G^\Gamma)$ (respectively, $(\mathfrak{g}_0,\mathfrak{g}_0^\Gamma)$) is called a Klein four symmetric pair of holomorphic type. In this article, the author considers Klein four symmetric pairs of non-holomorphic type; namely, there exists at least one element $\sigma\in\Gamma$ which defines a symmetric pair of anti-holomorphic type.
\end{remark}
With the notations above, for a Klein four symmetric pair $(G,G^\Gamma)$, the maximal compact subgroup $K$ of $G$ is always supposed to satisfy $\sigma(K)=K$ for all $\sigma\in\Gamma$, so that $K^\Gamma=K\cap G^\Gamma$ is a maximal compact subgroup of $G^\Gamma$.
\begin{proposition}\label{6}
If $(G,G^\Gamma)$ is a Klein four symmetric pair of holomorphic type, then any irreducible unitary highest weight representation $\pi$ of $G$ is $K^\Gamma$-admissible, and hence the underlying $(\mathfrak{g},K)$-module $\pi_K$ is discretely decomposable as a $(\mathfrak{g}^\Gamma,K^\Gamma)$-module.
\end{proposition}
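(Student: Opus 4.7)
The plan is to reduce the statement to the criterion already invoked in the Introduction: for a simple $G$ of Hermitian type, if a subgroup contains the center of $K$, then every irreducible unitary highest weight representation is admissible with respect to the corresponding maximal compact. Specifically, I will show that $K^\Gamma$ automatically contains the compact one-parameter subgroup generated by the grading element of $\mathfrak{k}_0$ whenever $(G,G^\Gamma)$ is of holomorphic type, and then exploit the $\mathrm{ad}(Z)$-eigenspace structure of $\pi$ to deduce $K^\Gamma$-admissibility; discrete decomposability will then follow from \cite[Proposition 1.6]{Ko4}.

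First I would recall the Harish-Chandra decomposition $\mathfrak{g}=\mathfrak{k}\oplus\mathfrak{p}^+\oplus\mathfrak{p}^-$ associated with the one-dimensional center $\mathbb{R}Z$ of $\mathfrak{k}_0$, so that $\mathrm{ad}(Z)$ has eigenvalues $0,+1,-1$ on the three summands. A nontrivial involution $\sigma\in\mathrm{Aut}(G)$ defines a symmetric pair of holomorphic type exactly when $\sigma$ preserves $\mathfrak{p}^+$, equivalently $\sigma(Z)=Z$. By hypothesis every nonidentity $\sigma\in\Gamma$ is of this form, so $Z\in\mathfrak{g}_0^\Gamma\cap\mathfrak{k}_0$ and the compact torus $\exp(\mathbb{R}Z)$ lies in $K^\Gamma$; in particular $Z$ is a central element of $\mathfrak{k}^\Gamma$.

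Next I would invoke the known structure of a unitary highest weight representation $\pi$: each $K$-type occurs with finite multiplicity, $Z$ acts on each $K$-type by a single real scalar, the set of these scalars is discrete and bounded above, and only finitely many $K$-types occur at each $Z$-level. Consequently every $\mathrm{ad}(Z)$-eigenspace of $\pi_K$ is finite-dimensional. Given any irreducible $K^\Gamma$-module $W$, Schur's lemma together with $Z\in\mathfrak{z}(\mathfrak{k}^\Gamma)$ forces $Z$ to act on $W$ by a single scalar, so the $W$-isotypic component of $\pi_K$ lies inside one $\mathrm{ad}(Z)$-eigenspace and is therefore finite-dimensional. This is exactly $K^\Gamma$-admissibility of $\pi$, and \cite[Proposition 1.6]{Ko4} then yields the discrete decomposability of $\pi_K$ as a $(\mathfrak{g}^\Gamma,K^\Gamma)$-module.

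The step requiring the most care is the characterization of holomorphic type by $\sigma(Z)=Z$, since $\Gamma$ lives in $\mathrm{Aut}(G)$ and antiholomorphic involutions (which send $Z$ to $-Z$) are a priori permitted; these are, however, excluded by the definition of \emph{holomorphic type} recalled above. Once that point is in hand, no additional difficulty arises from passing from a single involution to the Klein four group, because the three nonidentity elements of $\Gamma$ simultaneously fix $Z$ and the admissibility argument is insensitive to whether $K^\Gamma$ comes from one or several commuting involutions.
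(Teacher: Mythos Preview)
Your argument is correct and is essentially the same approach as the paper's: the paper simply cites \cite[Corollary~7]{H1} for $K^\Gamma$-admissibility and \cite[Proposition~1.6]{Ko4} for discrete decomposability, and what you have written is precisely the content behind those citations---namely, that holomorphic type forces each nonidentity $\sigma\in\Gamma$ to fix the central element $Z$, so $\exp(\mathbb{R}Z)\subseteq K^\Gamma$, and the finite-dimensionality of the $Z$-eigenspaces in a unitary highest weight module then yields $K^\Gamma$-admissibility.
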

\begin{proof}
See \cite[Theorem 2.9(1) \& Example 2.13 \& Example 3.3]{Ko3} or \cite[Corollary 7]{H1} for the first statement, and the second statement follows from \cite[Proposition 1.6]{Ko4}.
\end{proof}
\section{An investigation to classfication}
Let $G$ be a noncompact simple Lie group, and $\Gamma$ a Klein four subgroup of $\mathrm{Aut}G$. Then $(G,G^\Gamma)$ is a Klein four symmetric pair and $(G,G^\sigma)$ is a symmetric pair for each nonidentity $\sigma\in\Gamma$. Take a maximal compact subgroup $K$ of $G$ such that $\sigma(K)=K$ for all $\sigma\in\Gamma$.
\begin{lemma}\label{9}
Let $\pi_K$ be a unitarizable simple $(\mathfrak{g},K)$-module, and let $\sigma\in\Gamma$. If $\pi_K$ is discretely decomposable as a $(\mathfrak{g}^\Gamma,K^\Gamma)$-module and is also discretely decomposable as a $(\mathfrak{g}^\sigma,K^\sigma)$-module, then there exists a unitarizable simple $(\mathfrak{g}^\sigma,K^\sigma)$-module which is discretely decomposable as a $(\mathfrak{g}^\Gamma,K^\Gamma)$-module.
\end{lemma}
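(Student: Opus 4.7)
The plan is to produce the required unitarizable simple $(\mathfrak{g}^\sigma,K^\sigma)$-module from within $\pi_K$ itself. First I would invoke \lemmaref{7} to conclude that $\pi_K$ is discretely decomposable as a $(\mathfrak{g}^\sigma,K^\sigma)$-module. Since $\pi$ is unitary, $\pi_K$ carries a $G$-invariant positive definite Hermitian form, which restricts to a $(\mathfrak{g}^\sigma,K^\sigma)$-invariant one; thus $\pi_K$ is unitarizable as a $(\mathfrak{g}^\sigma,K^\sigma)$-module, and \remarkref{2} yields an algebraic decomposition
\[
\pi_K=\bigoplus_{\beta}\tau_\beta
\]
into simple $(\mathfrak{g}^\sigma,K^\sigma)$-modules $\tau_\beta$. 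Each $\tau_\beta$ inherits the invariant form, so every $\tau_\beta$ is already a unitarizable simple $(\mathfrak{g}^\sigma,K^\sigma)$-module; these are my candidates.

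What remains is to check that at least one such $\tau_\beta$ is discretely decomposable as a $(\mathfrak{g}^\Gamma,K^\Gamma)$-module, and in fact I would show that \emph{every} $\tau_\beta$ is. Applying \remarkref{2} to the hypothesis of the lemma, I would write $\pi_K=\bigoplus_\alpha W_\alpha$ as an algebraic direct sum of simple $(\mathfrak{g}^\Gamma,K^\Gamma)$-modules. Viewing $\tau_\beta$ as a $(\mathfrak{g}^\Gamma,K^\Gamma)$-submodule of this semisimple $(\mathfrak{g}^\Gamma,K^\Gamma)$-module, the general abstract fact that every submodule of a semisimple module is semisimple shows that $\tau_\beta$ itself decomposes as a direct sum of simple $(\mathfrak{g}^\Gamma,K^\Gamma)$-modules. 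Filtering this direct sum by its finite partial sums produces a filtration by finite length $(\mathfrak{g}^\Gamma,K^\Gamma)$-modules satisfying \definitionref{1}, so $\tau_\beta$ is discretely decomposable as a $(\mathfrak{g}^\Gamma,K^\Gamma)$-module.

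The only step not obtained by a direct appeal to \lemmaref{7} and \remarkref{2} is the general statement that submodules of semisimple modules are semisimple, which for $(\mathfrak{g}^\Gamma,K^\Gamma)$-modules follows from the abstract observation that in a semisimple module every submodule is a direct summand. This is purely formal, so I do not anticipate any substantive obstacle; the entire argument should reduce to a short piece of bookkeeping once \lemmaref{7} is combined with the two invocations of \remarkref{2}.
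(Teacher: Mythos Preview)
Your argument is correct and shares the same skeleton as the paper's: invoke \lemmaref{7} to get discrete decomposability at the intermediate level $(\mathfrak{g}^\sigma,K^\sigma)$, then use \remarkref{2} to write $\pi_K=\bigoplus_\beta\tau_\beta$ as an algebraic direct sum of unitarizable simple $(\mathfrak{g}^\sigma,K^\sigma)$-modules. The divergence is only in the last step. The paper picks a single simple $(\mathfrak{g}^\Gamma,K^\Gamma)$-module $\tau_{K^\Gamma}$ with $\mathrm{Hom}_{\mathfrak{g}^\Gamma,K^\Gamma}(\tau_{K^\Gamma},\pi_K)\neq0$ (via \propositionref{3}), pushes this nonzero Hom through the direct sum $\pi_K=\bigoplus_i m(i)\eta_{iK^\sigma}$ to locate one summand $\eta_{iK^\sigma}$ receiving $\tau_{K^\Gamma}$, and then applies \propositionref{3} again to that summand. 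You instead apply \remarkref{2} directly to the $(\mathfrak{g}^\Gamma,K^\Gamma)$-structure of $\pi_K$ and appeal to the abstract fact that a submodule of a semisimple module is semisimple, which is legitimate in the category of $(\mathfrak{g}^\Gamma,K^\Gamma)$-modules since any element of $\tau_\beta$ lies in a finite sub-sum of the $W_\alpha$'s and hence generates a finite-length submodule. Your route avoids \propositionref{3} entirely and yields the mildly stronger conclusion that \emph{every} $\tau_\beta$ is discretely decomposable over $(\mathfrak{g}^\Gamma,K^\Gamma)$, whereas the paper's Hom-chasing argument only singles out one; on the other hand, the paper's approach is slightly more self-contained within the results already stated.
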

\begin{proof}
By the assumption that $\pi_K$ is discrete decomposable as a $(\mathfrak{g}^\sigma,K^\sigma)$-module, it follows from \remarkref{2} that $\pi_K=\bigoplus\limits_im(i)\eta_{iK^\sigma}$ is an algebraic direct sum of simple $(\mathfrak{g}^\sigma,K^\sigma)$-modules because $\pi_K$ is unitarizable, where $m(i)$ is the multiplicity of $\eta_{iK^\sigma}$. On the other hand, $\pi_K$ is discretely decomposable as a $(\mathfrak{g}^\Gamma,K^\Gamma)$-module, so according to \propositionref{3}, there exists a simple $(\mathfrak{g}^\Gamma,K^\Gamma)$-module $\tau_{K^\Gamma}$ such that $\mathrm{Hom}_{\mathfrak{g}^\Gamma,K^\Gamma}(\tau_{K^\Gamma},\pi_K)\neq\{0\}$. Hence, one has $\{0\}\neq\mathrm{Hom}_{\mathfrak{g}^\Gamma,K^\Gamma}(\tau_{K^\Gamma},\bigoplus\limits_im(i)\eta_{iK^\sigma})\subseteq \prod\limits_im(i)\mathrm{Hom}_{\mathfrak{g}^\Gamma,K^\Gamma}(\tau_{K^\Gamma},\eta_{iK^\sigma})$, and there is at least one simple $(\mathfrak{g}^\sigma,K^\sigma)$-module $\eta_{iK^\sigma}$ such that $\mathrm{Hom}_{\mathfrak{g}^\Gamma,K^\Gamma}(\tau_{K^\Gamma},\eta_{iK^\sigma})\neq0$. Again by \propositionref{3}, $\eta_{iK^\sigma}$ is discretely decomposable as a $(\mathfrak{g}^\Gamma,K^\Gamma)$-module, and it is unitarizable because $\pi_K$ is unitarizable.
\end{proof}
The author will find out all the Klein four symmetric pairs $(G,G^\Gamma)$ of exceptional Lie groups of Hermitian type as required in \problemref{21}. Since the author excludes the cases when $G^\Gamma$ is compact, $\Gamma$ is not supposed to contain any Cartan involution of $G$. On the other hand, it is known from \cite{HM} that, for any nontrivial irreducible unitary representation of $G$, its restriction to its reductive subgroup $G'$ contains no trivial subrepresentations. Hence in \lemmaref{9}, if $\pi_K$ is supposed to be nontrivial, all the direct summands are nontrivial. Because of the conditions in \problemref{21}, \propositionref{6}, and \lemmaref{9}, the author needs to find reductive Lie group triple $(G,G^\sigma,G^\Gamma)$ such that
\begin{enumerate}[$\bullet$]
\item $G$ is either $\mathrm{E}_{6(-14)}$ or $\mathrm{E}_{7(-25)}$.
\item both $G^\sigma$ and $G^\Gamma$ are noncompact.
\item $(G,G^\sigma)$ is a symmetric pair of anti-holomorphic type, such that there exists at least one nontrivial unitarizable simple $(\mathfrak{g},K)$-module which is discretely decomposable as a $(\mathfrak{g}^\sigma,K^\sigma)$-module.
\item $(G^\sigma,G^\Gamma)$ is a symmetric pair, such that there exists at least one nontrivial unitarizable simple $(\mathfrak{g}^\sigma,K^\sigma)$-module which is discretely decomposable as a $(\mathfrak{g}^\Gamma,K^\Gamma)$-module.
\end{enumerate}
According to the classification result \cite[Theorem 5.2 \& Table 1]{KO2}, $(\mathrm{E}_{6(-14)},\mathrm{F}_{4(-20)},\mathrm{Spin}(8,1))$ (up to covering group) is the only triple that satisfies the above four conditions.

In the next section, the author will show that $(\mathfrak{g}_0,\mathfrak{g}_0^\Gamma)=(\mathfrak{e}_{6(-14)},\mathfrak{so}(8,1))$ is a Klein four symmetric pair. Moreover, $(G,G^\Gamma)$ satisfies the conditions in \problemref{21}.
\section{$(\mathfrak{e}_{6(-14)},\mathfrak{so}(8,1))$}
\subsection{Elementary abelian 2-subgroups in $\mathrm{Aut}\mathfrak{e}_{6(-78)}$}
Recall the construction of the elementary abelian 2-subgroups in the compact Lie group $\mathrm{Aut}\mathfrak{e}_{6(-78)}$ in \cite{Y}. Let $\mathfrak{e}_6$ be the complex simple Lie algebra of type $\mathrm{E}_6$. Fix a Cartan subalgebra of $\mathfrak{e}_6$ and a simple root system $\{\alpha_i\mid1\leq i\leq6\}$, the Dynkin diagram of which is given in Figure 1.
\begin{figure}
\centering \scalebox{0.7}{\includegraphics{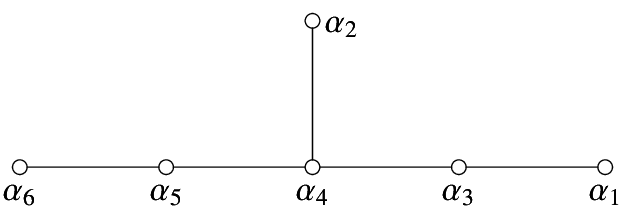}}
\caption{Dynkin diagram of $\mathrm{E}_6$.}
\end{figure}
For each root $\alpha$, denote by $H_\alpha$ its coroot, and denote by $X_\alpha$ the normalized root vector so that $[X_\alpha,X_{-\alpha}]=H_\alpha$. Moreover, one can normalize $X_\alpha$ appropriately such that\[\mathrm{Span}_\mathbb{R}\{X_\alpha-X_{-\alpha},\sqrt{-1}(X_\alpha+X_{-\alpha}),\sqrt{-1}H_\alpha\mid\alpha:\textrm{positive root}\}\cong\mathfrak{e}_{6(-78)}\]is a compact real form of $\mathfrak{g}$ by \cite{Kn}. It is well known that $\mathrm{Aut}\mathfrak{e}_{6(-78)}/\mathrm{Int}\mathfrak{e}_{6(-78)}\cong\mathrm{Aut}\mathfrak{e}_6/\mathrm{Int}\mathfrak{e}_6$, which is just the automorphism group of the Dynkin diagram.

Follow the constructions of involutive automorphisms of $\mathfrak{e}_{6(-78)}$ in \cite{HY}. Let $\omega$ be the specific involutive automorphism of the Dynkin diagram defined by
\begin{eqnarray*}
\begin{array}{rclcrcl}
\omega(H_{\alpha_1})=H_{\alpha_6},&&\omega(X_{\pm\alpha_1})=X_{\pm\alpha_6},\\
\omega(H_{\alpha_2})=H_{\alpha_2},&&\omega(X_{\pm\alpha_2})=X_{\pm\alpha_2},\\
\omega(H_{\alpha_3})=H_{\alpha_5},&&\omega(X_{\pm\alpha_3})=X_{\pm\alpha_5},\\
\omega(H_{\alpha_4})=H_{\alpha_4},&&\omega(X_{\pm\alpha_4})=X_{\pm\alpha_4}.
\end{array}
\end{eqnarray*}
Let $\sigma_1=\mathrm{exp}(\sqrt{-1}\pi H_{\alpha_2})$, $\sigma_2=\mathrm{exp}(\sqrt{-1}\pi(H_{\alpha_1}+H_{\alpha_6}))$, $\sigma_3=\omega$, $\sigma_4=\omega\mathrm{exp}\\(\sqrt{-1}\pi H_{\alpha_2})$, where $\mathrm{exp}$ represents the exponential map from $\mathfrak{e}_{6(-78)}$ to $\mathrm{Aut}\mathfrak{e}_{6(-78)}$. Then $\sigma_1$, $\sigma_2$, $\sigma_3$, and $\sigma_4$ represent all conjugacy classes of involutions in $\mathrm{Aut}\mathfrak{e}_{6(-78)}$, which correspond to real forms $\mathfrak{e}_{6(2)}$, $\mathfrak{e}_{6(-14)}$, $\mathfrak{e}_{6(-26)}$, and $\mathfrak{e}_{6(6)}$.

From \cite{HY}, it is known that $(\mathrm{Int}\mathfrak{e}_{6(-78)})^{\sigma_3}\cong\mathrm{F}_{4(-52)}$, the compact Lie group of type $\mathrm{F}_4$, and there exist involutive automorphisms $\tau_1$ and $\tau_2$ of $\mathrm{F}_{4(-52)}$ such that $\mathfrak{f}_{4(-52)}^{\tau_1}\cong\mathfrak{sp}(3)\oplus\mathfrak{sp}(1)$ and $\mathfrak{f}_{4(-52)}^{\tau_2}\cong\mathfrak{so}(9)$, where $\mathfrak{f}_{4(-52)}$ denotes the compact Lie algebra of type $\mathrm{F}_4$. Moreover, $\tau_1$ and $\tau_2$ are conjugate to $\sigma_1$ and $\sigma_2$ in $\mathrm{Aut}\mathfrak{e}_{6(-78)}$ respectively, which represent all conjugacy classes of involutions in $\mathrm{Aut}\mathfrak{f}_{4(-52)}$.

Now one has $((\mathrm{Int}\mathfrak{e}_{6(-78)})^{\sigma_3})^{\tau_1}\cong\mathrm{Sp}(3)\times\mathrm{Sp}(1)/\langle(-I_3,-1)\rangle$, where $I_3$ is the $3\times3$ identity matrix. Let $\mathbf{i}$, $\mathbf{j}$, and $\mathbf{k}$ denote the fundamental quaternion units, and then set $x_0=\sigma_3$, $x_1=\tau_1=(I_3,-1)$, $x_2=(\mathbf{i}I_3,\mathbf{i})$, $x_3=(\mathbf{j}I_3,\mathbf{j})$, $x_4=(\left(\begin{array}{ccc}-1&0&0\\0&-1&0\\0&0&1\end{array}\right),1)$, and $x_5=(\left(\begin{array}{ccc}-1&0&0\\0&1&0\\0&0&-1\end{array}\right),1)$. For a pair $(r,s)$ of integers with $r\leq2$ and $s\leq3$, define\[F_{r,s}:=\langle x_0,x_1,\cdots,x_s,x_4,x_5,\cdots,x_{r+3}\rangle\]the group generated by the elements in the angle blanket. According to \cite[Proposition 6.3]{Y}, each elementary abelian 2-subgroup in $\mathrm{Aut}\mathfrak{e}_{6(-78)}$, which contains an element conjugate to $\sigma_3$, is conjugate to one of the groups in $F_{r,s}$.
\subsection{Klein four symmetric pair $(\mathfrak{e}_{6(-14)},\mathfrak{so}(8,1))$}
If $\sigma$ is an involutive automorphism of $\mathfrak{e}_{6(-78)}$, which is conjugate to $\sigma_2$ as defined above, then by the construction, $\mathfrak{e}_{6(-14)}\cong\mathfrak{e}_{6(-78)}^\sigma+\sqrt{-1}\mathfrak{e}_{6(-78)}^{-\sigma}$, where $\mathfrak{e}_{6(-78)}^{\pm\sigma}$ are the eigenspaces of eigenvalues $\pm1$ under the action of $\sigma$ on $\mathfrak{e}_{6(-78)}$. Thus, if $\langle\sigma,\eta_1,\eta_2,\cdots,\eta_n\rangle$ is an elementary abelian 2-subgroups of rank $n+1$ of $\mathrm{Aut}\mathfrak{e}_{6(-78)}$, then $\langle\eta_1,\eta_2,\cdots,\eta_n\rangle$ is an elementary abelian 2-subgroups of rank $n$ of $\mathrm{Aut}\mathfrak{e}_{6(-14)}$.

Recall the construction of $F_{r,s}$ above. If taking $r=2$ and $s=0$, then $F_{2,0}=\{x_0,x_4,x_5\}$ is an elementary abelian 2-subgroups of rank $3$ of $\mathrm{Aut}\mathfrak{e}_{6(-78)}$. It is already known that $x_0=\sigma_3$ by the construction. Moreover, by \cite[Lemma 8]{H1}, it is known that $x_4$, $x_5$, and $x_4x_5$ are the only three elements conjugate to $\sigma_2$. Thus, $\langle x_0,x_4\rangle$ is a Klein four subgroup of  $\mathrm{Aut}\mathfrak{e}_{6(-14)}\cong\mathrm{Aut}(\mathfrak{e}_{6(-78)}^{x_5}+\sqrt{-1}\mathfrak{e}_{6(-78)}^{-x_5})$ as well as that of $\mathrm{Aut}\mathfrak{e}_{6(-78)}$.
\begin{proposition}\label{10}
The pair $(\mathfrak{e}_{6(-14)},\mathfrak{so}(8,1))$ is a Klein four symmetric pair.
\end{proposition}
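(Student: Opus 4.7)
The plan is to verify that $\Gamma:=\langle x_0,x_4\rangle$ is a Klein four subgroup of $\mathrm{Aut}\,\mathfrak{e}_{6(-14)}$ and to identify its fixed-point subalgebra $\mathfrak{e}_{6(-14)}^\Gamma$ with $\mathfrak{so}(8,1)$. The Klein four property is immediate: since $x_0$, $x_4$, $x_5$ all belong to the elementary abelian 2-subgroup $F_{2,0}$ of $\mathrm{Aut}\,\mathfrak{e}_{6(-78)}$, they commute pairwise, so $x_0$ and $x_4$ descend to commuting involutions on $\mathfrak{e}_{6(-14)}\cong\mathfrak{e}_{6(-78)}^{x_5}+\sqrt{-1}\,\mathfrak{e}_{6(-78)}^{-x_5}$. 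Their distinctness and nontriviality on $\mathfrak{e}_{6(-14)}$ follow because $x_0$ is outer while $x_4$ is inner on $\mathfrak{e}_{6(-78)}$, so no relation $x_0x_4=e$ can hold on the complexification.

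Next, I determine $\mathfrak{e}_{6(-14)}^\Gamma$ by computing its complexification together with its maximal compact subalgebra. The complexification coincides with the complexification of $\mathfrak{e}_{6(-78)}^\Gamma=(\mathfrak{e}_{6(-78)}^{x_0})^{x_4}=\mathfrak{f}_{4(-52)}^{x_4}$, using $\mathfrak{e}_{6(-78)}^{x_0}\cong\mathfrak{f}_{4(-52)}$ from $x_0=\sigma_3$. To evaluate $\mathfrak{f}_{4(-52)}^{x_4}$, I use the explicit description of $x_4$ as $(\mathrm{diag}(-1,-1,1),1)$ inside $\mathrm{Sp}(3)\times\mathrm{Sp}(1)/\langle(-I_3,-1)\rangle\cong\mathrm{F}_{4(-52)}^{\tau_1}$, decompose $\mathfrak{f}_{4(-52)}=\mathfrak{sp}(3)\oplus\mathfrak{sp}(1)\oplus V$ with $V$ the $(-1)$-eigenspace of $\tau_1$ (an irreducible $(\mathfrak{sp}(3)\oplus\mathfrak{sp}(1))$-module of dimension $28$), and count the $(+1)$-eigendimension of $x_4$ on each summand. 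The total is $36=\dim\mathfrak{so}(9)$, so $\mathfrak{e}_{6(-14)}^\Gamma$ is a real form of $\mathfrak{so}(9,\mathbb{C})$.

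To single out the correct real form, I identify its maximal compact subalgebra with $(\mathfrak{e}_{6(-78)}^{x_5})^\Gamma=\mathfrak{f}_{4(-52)}^{\langle x_4,x_5\rangle}$. Repeating the eigenspace count with the simultaneous action of $x_4$ and $x_5=(\mathrm{diag}(-1,1,-1),1)$ on the three summands of $\mathfrak{f}_{4(-52)}$ yields dimension $28=\dim\mathfrak{so}(8)$. Since $\mathfrak{so}(8,1)$ is the unique real form of $\mathfrak{so}(9,\mathbb{C})$ whose maximal compact subalgebra is $\mathfrak{so}(8)$, this forces $\mathfrak{e}_{6(-14)}^\Gamma\cong\mathfrak{so}(8,1)$.

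The main technical step is the eigenspace analysis on $V$. Here one exploits the decomposition of $\mathbb{C}^6$ into three 2-dimensional quaternionic blocks on which $\langle x_4,x_5\rangle$ acts by three of the four nontrivial characters of the Klein four group; this produces explicit character multiplicities on $\wedge^3\mathbb{C}^6$, and after subtracting the trace part one reads off the simultaneous $(\pm 1,\pm 1)$-multiplicities on $V$. The remaining ingredients---commutativity of $x_0$, $x_4$, $x_5$, outer vs.\ inner conjugacy, and recognition of a real form from the pair (total dimension, compact part)---are routine once the block decomposition is in hand.
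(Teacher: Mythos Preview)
Your argument is correct and reaches the same conclusion as the paper, but by a genuinely different route. The paper's proof is essentially an appeal to the classification tables of Huang--Yu \cite{HY}: it observes that $\langle x_0,x_4\rangle$ contains elements conjugate to $\sigma_2$ and to $\sigma_3$, hence must be conjugate to the unique Klein four subgroup $\Gamma_7$ listed there with $\mathfrak{e}_{6(-78)}^{\Gamma_7}\cong\mathfrak{so}(9)$; and similarly it identifies $\mathfrak{f}_{4(-52)}^{\langle x_4,x_5\rangle}\cong\mathfrak{so}(8)$ by noting that $x_4$, $x_5$, $x_4x_5$ are all conjugate to $\tau_2$ and citing the same tables again. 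You instead perform the eigenspace computation by hand via the decomposition $\mathfrak{f}_{4(-52)}=\mathfrak{sp}(3)\oplus\mathfrak{sp}(1)\oplus V$ with $V\otimes\mathbb{C}\cong V_{14}\otimes V_2$, using the explicit matrix form of $x_4,x_5$ in $\mathrm{Sp}(3)\times\mathrm{Sp}(1)$. This is more self-contained (no dependence on the external classification) at the cost of a longer calculation. One small point to tighten: a dimension count alone (36, respectively 28) does not \emph{a priori} identify the fixed-point algebra; you should remark that $\mathrm{Aut}\,\mathfrak{f}_{4(-52)}$ has only two involution classes (fixed-point dimensions $24$ and $36$), and that any involution on $\mathfrak{so}(9)$ fixes some $\mathfrak{so}(p)\oplus\mathfrak{so}(9-p)$, so dimension $28$ forces $p\in\{1,8\}$. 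With those remarks your dimension counts do pin down the algebras uniquely.
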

\begin{proof}
The author shows that $\mathfrak{e}_{6(-14)}^{\langle x_0,x_4\rangle}\cong\mathfrak{so}(8,1)$. In fact, according to the classification of the Klein four symmetric pairs in \cite[Table 4]{HY}, there is only one Klein four subgroup $\Gamma_7$ of $\mathrm{Aut}\mathfrak{e}_{6(-78)}$, which contains two elements conjugate to $\sigma_2$ and $\sigma_3$ respectively, and $\mathfrak{e}_{6(-78)}^{\Gamma_7}\cong\mathfrak{so}(9)$. Therefore, $\langle x_0,x_4\rangle\cong\Gamma_7$, and $\mathfrak{e}_{6(-14)}^{\langle x_0,x_4\rangle}$ must be some noncompact dual of $\mathfrak{so}(9)$. Moreover, $\mathfrak{e}_{6(-14)}^{\langle x_0,x_4,x_5\rangle}=\mathfrak{e}_{6(-78)}^{\langle x_0,x_4,x_5\rangle}$ is a maximal compact subalgebra of $\mathfrak{e}_{6(-14)}^{\langle x_0,x_4\rangle}$. Notice that $\mathfrak{e}_{6(-78)}^{x_0}\cong\mathfrak{f}_{4(-52)}$. Since $x_4$, $x_5$, and $x_4x_5$ are all conjugate to $\tau_2$ in $\mathrm{Aut}\mathfrak{f}_{4(-52)}$, again by \cite[Table 4]{HY}, $\mathfrak{e}_{6(-14)}^{\langle x_0,x_4,x_5\rangle}=\mathfrak{e}_{6(-78)}^{\langle x_0,x_4,x_5\rangle}\cong\mathfrak{f}_{4(-52)}^{\langle x_0,x_4\rangle}\cong\mathfrak{so}(8)$. Now $\mathfrak{e}_{6(-14)}^{\langle x_0,x_4\rangle}$ is the noncompact dual of $\mathfrak{so}(9)$ with the maximal compact subalgebra $\mathfrak{so}(8)$, and hence it is just $\mathfrak{so}(8,1)$.
\end{proof}
\begin{remark}\label{11}
The proof of \lemmaref{10} greatly depends on the classification of Klein four symmetric pairs in \cite[Table 4]{HY}, where Jingsong HUANG and Jun YU used symbol $\sigma_i$ to represent conjugacy classes for all Lie algebras. Here, the author uses $\tau_i$ for $\mathfrak{f}_{4(-52)}$ in order to distinguish the conjugacy classes between $\mathfrak{f}_{4(-52)}$ and $\mathfrak{e}_{6(-78)}$.
\end{remark}
\subsection{Discretely decomposable restriction}
For convenience, write $G=\mathrm{E}_{6(-14)}$ and denote by its complexified Lie algebra $\mathfrak{g}$. Retain the notations as above, and $\langle x_0,x_4\rangle$ is a Klein four subgroup of $\mathrm{Aut}G$ such that $G^{\langle x_0,x_4\rangle}$ is the reductive subgroup of $G$ corresponding to the subalgebra $\mathfrak{so}(8,1)$. Take a maximal compact subgroup $K$ satisfying $x_0(K)=x_4(K)=K$, and denote by $\theta$ the Cartan involution of $G$ corresponding to $K$. Since $G$ is a simple Lie group of Hermitian type with equal rank, namely, the rank of $\mathrm{E}_{6(-14)}$ coincides with the rank of its maximal compact subgroup, one may take a highest weight discrete series representation of $\mathrm{E}_{6(-14)}$.
\begin{lemma}\label{12}
Let $\mathfrak{g}_0=\mathfrak{e}_{6(-14)}$, the Lie algebra of $G=\mathrm{E}_{6(-14)}$. Then $\mathfrak{g}_0^{x_4}\cong\mathfrak{so}(8,2)\oplus\sqrt{-1}\mathbb{R}$.
\end{lemma}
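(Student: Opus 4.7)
The plan is to decompose $\mathfrak{e}_{6(-78)}$ into joint eigenspaces for the commuting involutions $x_4$ and $x_5$, use the paper's realization $\mathfrak{e}_{6(-14)}=\mathfrak{e}_{6(-78)}^{x_5}+\sqrt{-1}\mathfrak{e}_{6(-78)}^{-x_5}$ from Subsection 4.2 to read off the $x_4$-fixed subalgebra, and finally to pin down the resulting real form of $\mathfrak{so}(10,\mathbb{C})\oplus\mathbb{C}$ by identifying its maximal compact subalgebra.

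Concretely, write $\mathfrak{e}_{6(-78)}=V_{++}\oplus V_{+-}\oplus V_{-+}\oplus V_{--}$, where $V_{\varepsilon\eta}$ is the joint eigenspace with $x_4$-eigenvalue $\varepsilon$ and $x_5$-eigenvalue $\eta$. The realization of $\mathfrak{e}_{6(-14)}$ becomes $V_{++}\oplus V_{-+}\oplus\sqrt{-1}V_{+-}\oplus\sqrt{-1}V_{--}$, and projecting onto the $x_4$-fixed part gives $\mathfrak{g}_0^{x_4}=V_{++}\oplus\sqrt{-1}V_{+-}$. Because $V_{++}$ is a Lie subalgebra of the compact real form and $\sqrt{-1}V_{+-}$ is its natural non-compact complement, this is automatically a Cartan decomposition. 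Since $x_4$ is conjugate to $\sigma_2$, we also have $V_{++}\oplus V_{+-}=\mathfrak{e}_{6(-78)}^{x_4}\cong\mathfrak{so}(10)\oplus\mathfrak{u}(1)$, so $\mathfrak{g}_0^{x_4}$ is a non-compact real form of $\mathfrak{so}(10,\mathbb{C})\oplus\mathbb{C}$ whose maximal compact subalgebra is $V_{++}=\mathfrak{e}_{6(-78)}^{\langle x_4,x_5\rangle}$.

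It remains to identify $V_{++}$. A dimension count using the fact, cited in the paper from \cite[Lemma 8]{H1}, that $x_4$, $x_5$, and $x_4x_5$ are all conjugate to $\sigma_2$ forces each of $V_{+-}, V_{-+}, V_{--}$ to have dimension $16$, hence $\dim V_{++}=30$. The Klein four subgroup $\langle x_4,x_5\rangle$ is the one in \cite[Table 4]{HY} whose three nonidentity elements all lie in the $\sigma_2$-class, and reading off that row yields $V_{++}\cong\mathfrak{so}(8)\oplus\mathfrak{u}(1)^2$; as a consistency check, the inclusion $\mathfrak{e}_{6(-78)}^{\langle x_0,x_4,x_5\rangle}\cong\mathfrak{so}(8)\subset V_{++}$ was already shown in the proof of \propositionref{10}, and $\mathfrak{so}(8)\oplus\mathfrak{u}(1)^2$ is the unique $30$-dimensional subalgebra of $\mathfrak{so}(10)\oplus\mathfrak{u}(1)$ containing this copy of $\mathfrak{so}(8)$. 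The unique real form of $\mathfrak{so}(10,\mathbb{C})\oplus\mathbb{C}$ with maximal compact subalgebra $\mathfrak{so}(8)\oplus\mathfrak{u}(1)^2$ is $\mathfrak{so}(8,2)\oplus\sqrt{-1}\mathbb{R}$, giving the claim.

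The main obstacle is this last identification: \emph{a priori} several $30$-dimensional subalgebras of $\mathfrak{so}(10)\oplus\mathfrak{u}(1)$ could occur as $V_{++}$, and one must rule them out. Invoking \cite[Table 4]{HY} is the cleanest route, but if a reader wishes to avoid the reference one can instead argue directly from the containment $\mathfrak{so}(8)\subset V_{++}\subset\mathfrak{so}(10)\oplus\mathfrak{u}(1)$ together with the dimension constraint. Once $V_{++}$ is identified, the passage from maximal compact to real form is routine.
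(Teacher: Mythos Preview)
Your argument is correct and follows essentially the same route as the paper's own proof: both identify the compact dual $\mathfrak{e}_{6(-78)}^{x_4}\cong\mathfrak{so}(10)\oplus\mathfrak{u}(1)$ via the $\sigma_2$-conjugacy of $x_4$, then read off the maximal compact subalgebra $\mathfrak{e}_{6(-78)}^{\langle x_4,x_5\rangle}\cong\mathfrak{so}(8)\oplus\mathfrak{u}(1)^2$ from \cite[Table~4]{HY}, and conclude the real form. Your joint-eigenspace bookkeeping and dimension count make explicit why $V_{++}$ is the maximal compact part, but this is an elaboration of the same argument rather than a different one.
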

\begin{proof}
It is known that $x_4$, $x_5$, and $x_4x_5$ are all conjugate to $\sigma_2$. By \cite[Table 1]{HY}, $\mathfrak{e}_{6(-78)}^{x_4}\cong\mathfrak{so}(10)\oplus\sqrt{-1}\mathbb{R}$ which is the compact dual of $\mathfrak{g}_0^{x_4}$. Moreover, by \cite[Table 4]{HY}, $\mathfrak{e}_{6(-78)}^{\langle x_4,x_5\rangle}\cong\mathfrak{so}(8)\oplus2(\sqrt{-1}\mathbb{R})$ which is a maximal compact subalgebra of $\mathfrak{g}_0^{x_4}$. Therefore, $\mathfrak{g}_0^{x_4}\cong\mathfrak{so}(8,2)\oplus\sqrt{-1}\mathbb{R}$.
\end{proof}
Choose a Cartan subalgebra $\mathfrak{t}$ of $\mathfrak{k}$, the complexification of the Lie algebra $\mathfrak{k}_0$ of $K$, and a positive system $\Delta^+(\mathfrak{k},\mathfrak{t})$. For a dominant integral weight $\lambda\in\mathfrak{t}^*$, denote by $F(\lambda)$ the irreducible representation of $K$ with the highest weight $\lambda$. Moreover, since $\mathfrak{g}_0=\mathfrak{e}_{6(-14)}$ is of Hermitian type, one has a decomposition $\mathfrak{g}=\mathfrak{k}+\mathfrak{p}_++\mathfrak{p}_-$ as a $\mathfrak{k}$-module. Let $\mathfrak{p}_-$ act as zero on $F(\lambda)$ and write $L(\lambda)$ for the unique simple quotient of the $(\mathfrak{g},K)$-module $U(\mathfrak{g})\otimes_{U(\mathfrak{k}+\mathfrak{p}_-)}F(\lambda)$, which is a lowest weight simple $(\mathfrak{g},K)$-module.

Recall the settings in Section 4.1. For each simple root $\alpha_i$, denote by $\omega_i$ the fundamental weight corresponding to $\alpha_i$. Suppose that $\alpha_6$ is the noncompact simple root corresponding to $\mathfrak{g}_0=\mathfrak{e}_{6(-14)}$. It is known from \cite{MO} that $L(3\omega_6)$ is unitarizable.
\begin{proposition}\label{15}
The unitarizable lowest weight simple $(\mathfrak{g},K)$-module $L(3\omega_6)$ is discretely decomposable as a $(\mathfrak{g}^{\langle x_0,x_4\rangle},K^{\langle x_0,x_4\rangle})(=(\mathfrak{so}(9,\mathbb{C}),\mathrm{Spin}(8)))$-module.
\end{proposition}
\begin{proof}
It is known from \cite[Setting 2.6 \& Theorem 3.1 \& Equation (3.22)]{MO} immediately that $L(3\omega_6)$ is discretely decomposable as a $(\mathfrak{g}^{x_4},K^{x_4})$-module, where $\mathfrak{g}^{x_4}\cong\mathfrak{so}(10,\mathbb{C})\oplus\mathfrak{so}(2,\mathbb{C})$ and $K^{x_4}\cong\mathrm{Spin}(8)\times\mathrm{Spin}(2)^2$ up to finite quotient. Moreover, one has a semisimple decomposition $L(3\omega_6)=\bigoplus\limits_{k=0}^{+\infty}L'(3\mu_1+k\mu_5)\boxtimes\mathbb{C}_{k+2}$ as a $(\mathfrak{g}^{x_4},K^{x_4})$-module, where $\mu_1$ and $\mu_5$ are the fundamental weights corresponding to the simple roots for $\mathfrak{so}(10,\mathbb{C})$ as in \cite[Setting 2.2]{MO}, $L'(3\mu_1+k\mu_5)$ is the lowest weight simple $(\mathfrak{g}^{x_4},K^{x_4})$-module with the parameter $3\mu_1+k\mu_5$, and $\mathbb{C}_{k+2}$ is some one dimensional $(\mathfrak{so}(2,\mathbb{C}),\mathrm{Spin}(2))$-module.

In particular, when $k=0$, $L'(3\mu_1)\boxtimes\mathbb{C}_2$ is a simple $(\mathfrak{g}^{x_4},K^{x_4})$-submodule of $L(3\omega_6)$. Also, $L'(3\mu_1)\boxtimes\mathbb{C}_2\cong L'(3\mu_1)$ as a $(\mathfrak{so}(10,\mathbb{C}),\mathrm{Spin}(8)\times\mathrm{Spin}(2))$-module. Moreover, according to \cite[Setting 2.2 \& Theorem 6.1]{MO}, $L'(3\mu_1)$ is simple as a $(\mathfrak{so}(9,\mathbb{C}),\mathrm{Spin}(8))$-module. Thus $\mathrm{Hom}_{\mathfrak{so}(9,\mathbb{C}),\mathrm{Spin}(8)}(L'(3\mu_1),L(3\omega_6))= \mathrm{Hom}_{\mathfrak{so}(9,\mathbb{C}),\mathrm{Spin}(8)}(L'(3\mu_1), \bigoplus\limits_{k=0}^{+\infty}L'(3\mu_1+k\mu_5)\boxtimes\mathbb{C}_{k+2})= \mathrm{Hom}_{\mathfrak{so}(9,\mathbb{C}),\mathrm{Spin}(8)}(L'(3\mu_1),\bigoplus\limits_{k=0}^{+\infty}L'(3\mu_1+k\mu_5))\neq\{0\}$. The conclusion follows from \propositionref{3}.
\end{proof}
\begin{lemma}\label{22}
Let $\mathfrak{g}_0=\mathfrak{e}_{6(-14)}$, the Lie algebra of $G=\mathrm{E}_{6(-14)}$. Then $\mathfrak{g}_0^{x_0}\cong\mathfrak{f}_{4(-20)}$.
\end{lemma}
\begin{proof}
The proof is similar to that for \lemmaref{12}. By \cite[Table 1]{HY}, $\mathfrak{e}_{6(-78)}^{x_0}\cong\mathfrak{f}_{4(-52)}$ which is the compact dual of $\mathfrak{g}_0^{x_0}$. Moreover, by the proof for \propositionref{10} and \cite[Table 4]{HY}, $\langle x_0,x_5\rangle\cong\Gamma_7$ and $\mathfrak{e}_{6(-78)}^{\langle x_0,x_5\rangle}\cong\mathfrak{so}(9)$ which is a maximal compact subalgebra of $\mathfrak{g}_0^{x_0}$. Therefore, $\mathfrak{g}_0^{x_0}\cong\mathfrak{f}_{4(-20)}$.
\end{proof}
\begin{proposition}\label{23}
The symmetric pair $(G,G^{x_0})=(\mathrm{E}_{6(-14)},\mathrm{F}_{4(-20)})$ is of anti-holomorphic type, and $L(3\omega_6)$ is discretely decomposable as a $(\mathfrak{g}^{x_0},K^{x_0})(=(\mathfrak{f}_{4,\mathbb{C}},\mathrm{Spin}(9)))$-module.
\end{proposition}
\begin{proof}
See \cite[Table C.2]{KO1} for the first statement. As for the second statement, it is known from \cite{MO} that $L(3\omega_6)$ is a minimal representation; in particular, it has the smallest Gelfand-Kirillov dimension. Thus, by \cite[Proposition 4.10 \& Theorem 5.2 \& Table 1]{KO2}, $L(3\omega_6)$ is discretely decomposable as a $(\mathfrak{f}_{4,\mathbb{C}},\mathrm{Spin}(9))$-module.
\end{proof}
The author ends up the section with a conclusion.
\begin{theorem}\label{16}
Let $G$ be an exceptional Lie group of Hermitian type, i.e., $G=\mathrm{E}_{6(-14)}$ or $\mathrm{E}_{7(-25)}$, and $(G,G^\Gamma)$ be a Klein four symmetric pair defined by a Klein four subgroup $\Gamma\subseteq\mathrm{Aut}G$ such that $G^\Gamma$ is noncompact. Take a maximal compact subgroup $K$ of $G$, which is stable under the action of all elements in $\Gamma$ on $G$. Then there exists a nontrivial unitarizable simple $(\mathfrak{g},K)$-module $\pi_K$ which is both discretely decomposable as a $(\mathfrak{g}^\Gamma,K^\Gamma)$-module and is discretely decomposable as a $(\mathfrak{g}^\sigma,K^\sigma)$-module for some $\sigma\in\Gamma$ of anti-holomorphic type, if and only if $(\mathfrak{g},\mathfrak{g}^\Gamma)=(\mathfrak{e}_{6(-14)},\mathfrak{so}(8,1))$.
\begin{proof}
Suppose that $(\mathfrak{g},\mathfrak{g}^\Gamma)=(\mathfrak{e}_{6(-14)},\mathfrak{so}(8,1))$. Take $\sigma\in\Gamma$ such that $\mathfrak{g}^\sigma=\mathfrak{f}_{4(-20)}$. Then $(\mathfrak{g},\mathfrak{g}^\sigma)$ is a symmetric pair of anti-holomorphic type, and $L(3\omega_6)$ is discretely decomposable as a $(\mathfrak{g}^\sigma,K^\sigma)$-module by \propositionref{23}. Moreover, it follows from \propositionref{15} that $L(3\omega_6)$ is discretely decomposable as a $(\mathfrak{g}^\Gamma,K^\Gamma)$-module. Conversely, suppose that $G^\Gamma$ is noncompact and $(G,G^\Gamma)$ is not of holomorphic type. If there exists a nontrivial unitarizable simple $(\mathfrak{g},K)$-module $\pi_K$ which is both discretely decomposable as a $(\mathfrak{g}^\Gamma,K^\Gamma)$-module and is discretely decomposable as a $(\mathfrak{g}^\sigma,K^\sigma)$-module for some $\sigma\in\Gamma$ of anti-holomorphic type, then only the pair $(G,G^\Gamma)$ corresponding to $(\mathfrak{g},\mathfrak{g}^\Gamma)=(\mathfrak{e}_{6(-14)},\mathfrak{so}(8,1))$ satisfies \lemmaref{9}.
\end{proof}
\end{theorem}
\section{A conjecture for $G^\Gamma$-admissibility}
Let $G$ be a noncompact simple Lie group, and $G'$ a reductive subgroup of $G$. Retain all the settings as in Section 1. Suppose that $\pi$ is an irreducible unitary representation of $G$ on a Hilbert space. If the underlying $(\mathfrak{g},K)$-module $\pi_K$ is discretely decomposable as a $(\mathfrak{g}',K')$-module, then $\pi$ decomposes as a Hilbert direct sum of irreducible subrepresentations of $G'$:\[\pi=\widehat{\bigoplus\limits_{\tau}}m(\tau)\tau\]where each $\tau$ is an irreducible representation of $G'$ with the multiplicity $m(\tau)\in\mathbb{Z}_{\geq0}\cup\{+\infty\}$. Moreover, denote by $\tau_{K'}$ the underlying $(\mathfrak{g}',K')$-module of $\tau$, and then one has $m(\tau)=\mathrm{Hom}_{\mathfrak{g}',K'}(\tau_{K'},\pi_K)$.

If $\pi$ decomposes as a Hilbert direct sum of irreducible subrepresentations of $G'$ with finite multiplicities, i.e., $m(\tau)\in\mathbb{Z}_{\geq0}$ for all $\tau$, then $\pi$ is called $G'$-admissible.

Retain all the settings as in Section 2.2. Now let $G$ be an exceptional Lie group, and $\Gamma$ a Klein four subgroup of $\mathrm{Aut}G$, which defines a Klein four symmetric pair $(G,G^\Gamma)$. In this section, the author conjectures a ``upper bound" of Klein four symmetric pair $(G,G^\Gamma)$ such that there exists a nontrivial irreducible unitary representation of $G$ which is $G^\Gamma$-admissible. According to \propositionref{6}, if $(G,G^\Gamma)$ is a Klein four symmetric pair of holomorphic type, any highest weight representation of $G$ is $K^\Gamma$-admissible, and hence is $G^\Gamma$-admissible by \cite[Theorem 1.2]{Ko2}. Now the author just assumes that $(G,G^\Gamma)$ is not of holomorphic type. Further, assume that $G^\Gamma$ is noncompact. Suppose that an irreducible unitary representation $\pi$ of $G$ is $G^\Gamma$-admissible, then it follows from \cite[Theorem 1.2]{Ko2} that $\pi$ is $G^\sigma$-admissible for all $\sigma\in\Gamma$. The following conjecture was raised by Toshiyuki KOBAYASHI in \cite[Conjecture E]{Ko1}.
\begin{conjecture}\label{17}
Suppose that $(G,G')$ is a symmetric pair. Then $\pi_K$ is discretely decomposable as a $(\mathfrak{g}',K')$-module if and only if $\pi$ is $G'$-admissible.
\end{conjecture}
If \conjectureref{17} is true, $\pi_K$ is discretely decomposable as a $(\mathfrak{g}^\sigma,K^\sigma)$-module for all $\sigma\in\Gamma$. By the classification of discretely decomposable $(\mathfrak{g},K)$-modules in \cite[Theorem 5.2 \& Table 1]{KO2}, the only case is that $G=\mathrm{E}_{6(-14)}$ and $G^\sigma=\mathrm{F}_{4(-20)}$ for some $\sigma\in\Gamma$. Recall the notations in Section 4, Klein four subgroups of $\mathrm{Aut}\mathfrak{e}_{6(-14)}$ correspond to elementary abelian 2-subgroups of rank 3 of $\mathrm{Aut}\mathfrak{e}_{6(-78)}$, and $\mathrm{Aut}\mathfrak{e}_{6(-14)}$ is a noncompact dual of $\mathrm{Aut}\mathfrak{e}_{6(-78)}$ defined by an involutive automorphism conjugate to $\sigma_2$. Moreover, if $G=\mathrm{E}_{6(-14)}$ and $G^\sigma=\mathrm{F}_{4(-20)}$, then $\sigma$ must correspond to an involutive automorphism conjugate to $\sigma_3$ by Section 4.1 and \cite[Table 1]{HY}. Therefore, one needs to find all the elementary abelian 2-subgroups of rank 3 of $\mathrm{Aut}\mathfrak{e}_{6(-78)}$ which contains two elements conjugate to $\sigma_2$ and $\sigma_3$ respectively. Recall the subgroups $F_{r,s}$ in Section 4.1, by \cite[Lemma 8]{H1} and \cite[Theorem 6.3 \& Table 3]{Y}, there is only one such subgroups: $\langle x_0,x_4,x_5\rangle$.

In the subgroup $\langle x_0,x_4,x_5\rangle$, the elements conjugate to $\sigma_2$ are exactly $x_4$, $x_5$, and $x_4x_5$ by \cite[Lemma 8]{H1}. By \cite{HY}, $x_0$, $x_0x_4$, $x_0x_5$, and $x_0x_4x_5$ are all conjugate to $\sigma_3$. By \cite[Proposition 6.9]{Y} and a quick computation as the proof for \cite[Lemma 12]{H1}, one only needs to consider the case that $\mathfrak{e}_{6(-14)}\cong\mathfrak{e}_{6(-78)}^{x_5}+\sqrt{-1}\mathfrak{e}_{6(-78)}^{-x_5}$ and $\Gamma=\langle x_0,x_4\rangle$. But this just gives the Klein four symmetric pair $(\mathfrak{e}_{6(-14)},\mathfrak{so}(8,1))$.

Thus, based on \conjectureref{17}, the author conjectures a ``upper bound" for $G^\Gamma$-admissible nontrivial irreducible unitary representations of $G$, taking compact $G^\Gamma$ into consideration.
\begin{conjecture}\label{18}
Let $G$ be an exceptional Lie group of Hermitian type, i.e., $G=\mathrm{E}_{6(-14)}$ or $\mathrm{E}_{7(-25)}$, and $(G,G^\Gamma)$ be a Klein four symmetric pair. Denote by $\mathfrak{g}$ and $\mathfrak{g}^\Gamma$ the complexified Lie algebras of $G$ and $G^\Gamma$ respectively. If there exists a nontrivial unitary irreducible representation of $G$ that is $G^\Gamma$-admissible, then $(G,G^\Gamma)$ satisfies one of the following conditions:
\begin{enumerate}[(i)]
\item $G^\Gamma$ is compact.
\item $(G,G^\Gamma)$ is a Klein for symmetric pair of holomorphic type.
\item $(\mathfrak{g},\mathfrak{g}^\Gamma)=(\mathfrak{e}_{6(-14)},\mathfrak{so}(8,1))$.
\end{enumerate}
\end{conjecture}
\begin{remark}\label{19}
\conjectureref{17} implies \conjectureref{18}. Even if \conjectureref{18} is true, it is not known whether there does exist a nontrivial unitary irreducible representation of $G$ that is $G^\Gamma$-admissible for condition (i) or (iii).
\end{remark}

\end{document}